\newcommand{\GF}[1]{\mathbb{F}_{#1}}
\newcommand{\C}[1]{\mathop{\operator@font C \kern 0pt}_{#1}}
\newcommand{\D}[1]{\mathop{\operator@font D \kern 0pt}_{#1}}
\newcommand{\Sym}[1]{\mathop{\operator@font S \kern 0pt}_{#1}}
\newcommand{\Alt}[1]{\mathop{\operator@font A \kern 0pt}_{#1}}
\newcommand{\GL}[2]{\mathop{\operator@font GL}(#1,#2)}
\newcommand{\SL}[2]{\mathop{\operator@font SL}(#1,#2)}
\newcommand{\PGL}[2]{\mathop{\operator@font PGL}(#1,#2)}
\newcommand{\PSL}[2]{\mathop{\operator@font PSL}(#1,#2)}
\newcommand{\PSU}[2]{\mathop{\operator@font PSU}(#1,#2)}
\newcommand{\PSp}[2]{\mathop{\operator@font PSp}(#1,#2)}
\newcommand{\Sz}[1]{\mathop{\operator@font Sz}(#1)}
\newcommand{\N}[2]{\mathop{\operator@font N \kern 0pt}_{#1}(#2)}
\numberwithin{equation}{section}
\theoremstyle{plain}      \newtheorem{CAGroupClassificationTheorem}
                                     {Theorem}[section]
\theoremstyle{plain}      \newtheorem{MainTheorem}
                                     [CAGroupClassificationTheorem]{Main Theorem}
\theoremstyle{plain}      \newtheorem{DicksonsLemma}
                                     {Lemma}[section]
\theoremstyle{plain}      \newtheorem{MinimalNonCA_PSLsLemma}
                                     [DicksonsLemma]{Lemma}
\theoremstyle{plain}      \newtheorem{NonSolvableMinimalNonCA_PerfectLemma}
                                     [DicksonsLemma]{Lemma}
\theoremstyle{plain}      \newtheorem{SuzukiGroupsLemma}
                                     [DicksonsLemma]{Lemma}
\begin{document}

\setcounter{page}{1}

\title[Finite non-solvable minimal non-CA-groups]
      {A classification of the finite non-solvable minimal non-CA-groups}

\author{Leyli Jafari, Stefan Kohl and Mohammad Zarrin}

\address{Department of Mathematics, 
         University of Kurdistan, 
         P.O. Box: 416, 
         Sanandaj, Iran}

\email{l.jafari@sci.uok.ac.ir, zarrin@ipm.ir}

\subjclass[2010]{20D06, 20D25, 20G43.}

\keywords{CA-group, simple group, Schur cover}

\begin{abstract}
  A group is called a \emph{\textbf{CA}-group} if the centralizer 
  of every non-central element is abelian. Furthermore, a group
  is called a \emph{minimal non-\textbf{CA}-group} if it is not a
  \textbf{CA}-group itself, but all of its proper subgroups are.
  In this paper, we give a classification of the finite non-solvable
  minimal non-\textbf{CA}-groups.
\end{abstract}

\maketitle

%%%%%%%%%%%%%%%%%%%%%%%%%%%%%%%%%%%%%%%%%%%%%%%%%%%%%%%%%%%%%%%%%%%%%%%%%%%%%%%%%%%%%%%%%%%%%%%%%%%%

\section{Introduction}

Let $\mathcal{X}$ be a class of groups.
A group $G$ is called a \emph{minimal non}-$\mathcal{X}$ group or an $\mathcal{X}$-\emph{critical}
group if $G \notin \mathcal{X}$, but all proper subgroups of $G$ belong to $\mathcal{X}$.
The minimal non-$\mathcal{X}$ groups have been studied for many classes of groups $\mathcal{X}$.
For instance, Miller and Moreno~\cite{Miller-Moreno03} investigated minimal non-abelian groups,
while Schmidt~\cite{Schmidt24} studied minimal non-nilpotent groups. The latter are known as
\emph{Schmidt groups} -- see also~\cite{Zarrin12}. Furthermore, Doerk~\cite{Doerk66} determined the
structure of minimal non-supersolvable groups. 

We say that a group $G$ is a \emph{\textbf{CA}-group} if the centralizer of every non-central
element is abelian. Examples of infinite \textbf{CA}-groups are free groups and Tarski monsters
groups. Finite \textbf{CA}-groups are of historical importance in the context of the classification
of finite simple groups, and there is a complete classification of finite \textbf{CA}-groups by
Schmidt~\cite{Schmidt70} as follows:

\begin{CAGroupClassificationTheorem} \label{CAGroupClassificationTheorem}
  Let $G$ be a non-abelian group, and let $Z$ denote the center of~$G$.
  Then $G$ is a \textbf{CA}-group if and only if one of the following holds:
  \begin{enumerate}

    \item $G$ is non-abelian and has an abelian normal subgroup of prime index.

    \item $G/Z$ is a Frobenius group with Frobenius kernel $K/Z$ and 
          Frobenius complement $L/Z$, where $K$ and $L$ are abelian.

    \item $G/Z$ is a Frobenius group with Frobenius kernel $K/Z$ and Frobenius complement $L/Z$,
          such that $K = PZ$, where $P$ is a normal Sylow $p$-subgroup of $G$ for some prime~$p$
          dividing the order of~$G$, the group $P$ is a \textbf{CA}-group,
          ${\rm Z}(P) = P \cap Z$ and $L = HZ$, where $H$ is an abelian $p'$-subgroup of~$G$.

    \item $G/Z\cong \Sym{4}$, and if $V/Z$ is the Klein four group in $G/Z$,
          then $V$ is non-abelian.

    \item $G = P \times A$, where $P$ is a non-abelian \textbf{CA}-group of prime power order
          and $A$ is abelian.

    \item $G/Z \cong \PSL{2}{p^n}$ or $G/Z \cong \PGL{2}{p^n}$, and $G' \cong \SL{2}{p^n}$, 
          where $p$ is a prime and $p^n > 3$.

    \item $G/Z \cong \PSL{2}{9}$ or $G/Z \cong \PGL{2}{9}$, and $G'$ is isomorphic 
          to the Schur cover of $\PSL{2}{9}$.

  \end{enumerate}
\end{CAGroupClassificationTheorem}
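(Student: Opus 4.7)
The plan is to exploit the combinatorial backbone forced by the \textbf{CA}-property, and where necessary to invoke the classification of non-abelian simple \textbf{CA}-groups. Write $Z = Z(G)$. A first observation is that any two distinct maximal abelian subgroups $A_1, A_2$ of $G$ containing $Z$ satisfy $A_1 \cap A_2 = Z$: a non-central element $x \in A_1 \cap A_2$ would force $A_1, A_2 \subseteq C_G(x)$, and abelianness of $C_G(x)$ would then yield $A_1 = A_2$. Hence the subsets $A/Z$ partition $G/Z$, and this partition drives the entire analysis.

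The next step is a case split governed by the normality behaviour of this partition. If $G$ possesses an abelian normal subgroup of prime index, case~(1) is immediate. Otherwise one examines whether some maximal abelian subgroup $A \supseteq Z$ is normal in $G$; when it is, the conjugation action of a complement on $A/Z$ produces a Frobenius structure on $G/Z$, and distinguishing whether $A$ has the form $PZ$ for a normal Sylow $p$-subgroup $P$ of $G$ separates cases~(2) and~(3). Case~(5) emerges from direct-product decompositions, and the exceptional case~(4) is extracted by a direct analysis of the configuration $G/Z \cong \Sym{4}$ with the Klein four-group preimage non-abelian.

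The heart of the proof is the non-solvable situation, and this is where the main obstacle lies. Every Sylow subgroup of a non-abelian simple \textbf{CA}-group must be abelian, since the centre of a non-abelian Sylow subgroup would otherwise have non-abelian centralizer; Suzuki's classical analysis of simple groups with abelian Sylow $2$-subgroups, combined with further centralizer analysis, then pins down the non-abelian simple \textbf{CA}-groups as exactly $\PSL{2}{q}$ with $q > 3$. In particular $\Sz{q}$ is excluded because the centralizer of any of its involutions is the full (non-abelian) Sylow $2$-subgroup. Thus, if $G$ is non-solvable, $G/Z$ must be $\PSL{2}{p^n}$ or $\PGL{2}{p^n}$, and a Schur-multiplier computation pins down $G'$: generically $G' \cong \SL{2}{p^n}$, which is case~(6), while in the exceptional case $p^n = 9$ the multiplier of $\PSL{2}{9}$ is $\ZZ/6$ and the full Schur cover appears, giving case~(7). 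The delicate points are (i)~showing that $G/Z$ cannot be a larger extension, and (ii)~verifying that no other central extension preserves the \textbf{CA}-property, which requires explicit centralizer computations in $\SL{2}{p^n}$ and in the $6$-fold cover of $\PSL{2}{9}$.
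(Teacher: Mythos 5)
The paper does not prove this theorem at all: it is quoted from R.~Schmidt's 1970 paper \emph{Zentralisatorverb\"ande endlicher Gruppen} (reference [7]), so there is no in-paper proof to compare against and your sketch has to be judged on its own. Your opening move is correct and is indeed the standard one: maximal abelian subgroups containing $Z$ intersect pairwise in $Z$, so their images partition $G/Z$. But the non-solvable case, which you rightly call the heart of the proof, rests on a false statement. The non-abelian simple \textbf{CA}-groups are \emph{not} ``exactly $\PSL{2}{q}$ with $q>3$''; they are exactly the groups $\PSL{2}{2^n}$, $n\geq 2$. For odd $q>5$ the group $\PSL{2}{q}$ is not a \textbf{CA}-group, because the centralizer of an involution is dihedral of order $q\pm 1$ and hence non-abelian (in $\PSL{2}{7}$ it is dihedral of order $8$). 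The error is not repairable within your strategy, because the ensuing inference ``thus $G/Z$ must be $\PSL{2}{p^n}$ or $\PGL{2}{p^n}$'' tacitly assumes that the \textbf{CA}-property passes from $G$ to $G/Z$, which it does not: the whole content of cases (6) and (7) is that $G$ can be a \textbf{CA}-group (e.g.\ $G=\SL{2}{7}$, whose unique involution is central, so that all non-central centralizers are cyclic of order $6$, $8$ or $14$) while $G/Z\cong\PSL{2}{7}$ is not. A classification of simple \textbf{CA}-groups therefore cannot be applied to $G/Z$, and with the correct classification ($\PSL{2}{2^n}$ only) your argument would fail to produce the odd-characteristic groups in case (6) at all.

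The route that actually matches the shape of the conclusion is to feed the partition of $G/Z$ from your first paragraph into the Baer--Kegel--Suzuki classification of finite groups admitting a non-trivial partition: this yields directly that $G/Z$ is a $p$-group of restricted type, a Frobenius group, $\Sym{4}$, $\PSL{2}{q}$, $\PGL{2}{q}$, or $\Sz{q}$, and the remaining work is to decide which central extensions of each of these are \textbf{CA}-groups --- this is where $\Sz{q}$ is eliminated (your observation about its involution centralizers is the right ingredient there) and where the conditions $G'\cong\SL{2}{p^n}$, respectively $G'$ isomorphic to the Schur cover of $\PSL{2}{9}$, come from. Your treatment of the solvable cases is also too thin to check --- in particular you never argue why the alternatives ``abelian normal subgroup of prime index / Frobenius quotient / $p$-group times abelian / $\Sym{4}$'' exhaust the solvable possibilities --- but the decisive gap is the one in the non-solvable case described above.
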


In this paper, we classify the non-solvable finite minimal non-\textbf{CA}-groups -- 
that is, the finite non-solvable groups which are not \textbf{CA}-groups themselves, 
but all of whose proper subgroups are:

\begin{MainTheorem} \label{main}
  A finite group $G$ is a non-solvable minimal non-\textbf{CA}-group 
  if and only if $G \cong \PSL{2}{q}$, and one of the following holds:
  \begin{enumerate}
    \item $q > 5$ is a prime number, and $16 \nmid q^2-1$.
    \item $q = 3^p$, where $p$ is an odd prime number.
    \item $q = 5^p$, where $p$ is an odd prime number.
  \end{enumerate}
\end{MainTheorem}

%%%%%%%%%%%%%%%%%%%%%%%%%%%%%%%%%%%%%%%%%%%%%%%%%%%%%%%%%%%%%%%%%%%%%%%%%%%%%%%%%%%%%%%%%%%%%%%%%%%%

\section{The proof of the main theorem}

To prove the main result, we need the following lemmata.

\begin{DicksonsLemma} \label{DicksonsLemma}
  (Dickson's Theorem; cf. e.g.~\cite{Huppert67}, p.~213): Given a prime power $q = p^m$, 
  a complete list of conjugacy classes of maximal subgroups of $\PSL{2}{q}$ is as follows: 
  \begin{enumerate}

    \item One class of subgroups isomorphic to $\C{q} \rtimes \C{(q-1)/(2,q-1)}$;
    \item One class of subgroups isomorphic to $\D{2(q-1)/(2,q-1)}$,
          in the case that $q \notin \{5, 7, 9, 11\}$;
    \item One class of subgroups isomorphic to $\D{2(q+1)/(2,q-1)}$,
          in the case that $q \notin \{7, 9\}$;
    \item Two classes of subgroups isomorphic to $\Alt{5}$, 
          if $q \equiv \pm 1$ (mod~10), and $\GF{q} = \GF{p}[\sqrt{5}]$;
    \item Two classes of subgroups isomorphic to $\Sym{4}$, if $q^2 \equiv 1$ (mod~16), $q$ prime;
    \item One class of subgroups isomorphic to $\Alt{4}$, if $m = 1$ and $q$ is congruent to either
          3, 5, 13, 27 or 37 (mod~40);
    \item Two classes of subgroups isomorphic to $\PGL{2}{p^{m/2}}$, in the case that $p$ is odd
          and $m$ is even;
    \item One class of subgroups isomorphic to $\PSL{2}{p^n}$, 
          where $m/n$ is an odd prime or $p = 2$ and $n = m/2$.

  \end{enumerate}
\end{DicksonsLemma}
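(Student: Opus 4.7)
The plan is to follow Dickson's classical approach via the natural $2$-transitive action of $\PSL{2}{q}$ on the projective line $\mathbb{P}^1(\GF{q})$ of $q+1$ points. First I would identify a point stabilizer with the image of the upper-triangular matrices in $\SL{2}{q}$, which is a Borel subgroup isomorphic to $\C{q} \rtimes \C{(q-1)/(2,q-1)}$; this gives case~(1), and this same subgroup is the normalizer of any Sylow $p$-subgroup of $\PSL{2}{q}$, where $q = p^m$.

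The analysis then splits according to whether a subgroup $H \le \PSL{2}{q}$ has non-trivial $p$-part. If $p \mid |H|$, a standard fixed-point argument (combined with the fact that any two distinct Sylow $p$-subgroups of $\PSL{2}{q}$ generate the whole group) forces $H$ to lie inside the normalizer of a single Sylow $p$-subgroup, hence in a Borel subgroup; no new maximal class arises from this branch. For the remaining case $\gcd(|H|,p) = 1$, I would invoke the classical list of finite $p'$-subgroups of $\PGL{2}{\overline{\GF{q}}}$: each such subgroup is cyclic, dihedral, $\Alt{4}$, $\Sym{4}$, or $\Alt{5}$.

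Cyclic subgroups of $\PSL{2}{q}$ of $p'$-order lie in one of the two types of maximal tori (split or non-split), and their normalizers produce the dihedral maximal subgroups $\D{2(q-1)/(2,q-1)}$ and $\D{2(q+1)/(2,q-1)}$ of cases~(2) and~(3); the small-$q$ exceptions arise precisely because for those $q$ the would-be dihedral group is contained in an exceptional overgroup (typically an $\Alt{5}$ or $\Sym{4}$) and therefore fails to be maximal. The subfield subgroups $\PSL{2}{p^n}$ (when $m/n$ is an odd prime, or $p = 2$ and $n = m/2$) and $\PGL{2}{p^{m/2}}$ (when $p$ is odd and $m$ is even) come from Galois-fixed subfields of $\GF{q}$, and their maximality follows by comparing orders against the other items on the list.

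The delicate remaining work is to pin down exactly when the exceptional subgroups $\Alt{4}, \Sym{4}, \Alt{5}$ embed and are maximal. Using the $2$-dimensional projective representations of these groups, one shows that $\Alt{5}$ embeds in $\PSL{2}{q}$ precisely when $\sqrt{5} \in \GF{q}$, giving the congruence $q \equiv \pm 1 \pmod{10}$ together with $\GF{q} = \GF{p}[\sqrt{5}]$ of case~(4); that $\Sym{4}$ requires a binary octahedral structure forcing $q^2 \equiv 1 \pmod{16}$ (case~(5), with the prime restriction coming from non-maximality inside a subfield subgroup otherwise); and a straightforward analysis modulo~$40$ isolates the primes $q$ for which $\Alt{4}$ is maximal but $\Sym{4}$ is absent, yielding the residue classes of case~(6). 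I expect the main obstacle to be exactly this bookkeeping: verifying in each arithmetic case that no listed overgroup actually contains the candidate subgroup and that no conjugacy class of maximal subgroups has been double-counted, which is the most intricate portion of Huppert's exposition.
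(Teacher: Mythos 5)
The paper itself offers no proof of this lemma: it is quoted as Dickson's classical theorem with a pointer to Huppert, so there is nothing in the paper to compare your argument against. Your outline does follow the same classical route as the cited source (the $2$-transitive action on the projective line, Borel subgroups, normalizers of tori, the exceptional subgroups, subfield subgroups), but it contains a genuine gap at the central case division. You claim that if $p \mid |H|$ then $H$ lies in the normalizer of a single Sylow $p$-subgroup, hence in a Borel subgroup, so that no new maximal class arises from that branch. That is false: the subfield subgroups $\PSL{2}{p^n}$ and $\PGL{2}{p^{m/2}}$ of items (7) and (8) -- and, when $p \in \{2,3,5\}$, the groups $\Alt{4}$, $\Sym{4}$, $\Alt{5}$ as well -- have order divisible by $p$ and lie in no Borel subgroup. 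The fact you invoke, that two distinct Sylow $p$-subgroups of $\PSL{2}{q}$ generate the whole group, concerns \emph{full} Sylow $p$-subgroups of $G$; a proper subgroup $H$ can have several non-normal, nontrivial Sylow $p$-subgroups of its own without containing a full Sylow $p$-subgroup of $G$ (e.g.\ $\PSL{2}{p} < \PSL{2}{p^3}$), so your fixed-point argument does not force $H$ into a Borel.

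The correct dichotomy is: either the Sylow $p$-subgroup of $H$ is normal in $H$, in which case $H$ does lie in a Borel subgroup, or $H$ has more than one nontrivial Sylow $p$-subgroup, in which case one must prove that $H$ is conjugate to $\PSL{2}{p^n}$ or $\PGL{2}{p^n}$ for some subfield $\GF{p^n}$ of $\GF{q}$. This second branch is the technical heart of Dickson's proof (Huppert, Hauptsatz II.8.27), and your sketch asserts it away; introducing the subfield subgroups afterwards as coming from Galois-fixed subfields shows that they \emph{exist}, but not that the list is \emph{complete}. As written, your case division would "prove" that items (7) and (8) cannot occur, contradicting the statement itself. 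The rest of your outline -- the classification of $p'$-subgroups, the dihedral normalizers of the two tori with the small-$q$ exceptions, and the arithmetic conditions for $\Alt{4}$, $\Sym{4}$, $\Alt{5}$ -- is consistent with the standard treatment and would go through once the $p \mid |H|$ branch is repaired.
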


\begin{MinimalNonCA_PSLsLemma} \label{MinimalNonCA_PSLsLemma}
  A projective special linear group $\PSL{2}{q}$ is a minimal non-\textbf{CA}-group
  if and only if one of the following holds:
  \begin{enumerate}
    \item $q > 5$ is a prime number, and $16 \nmid q^2-1$.
    \item $q = 3^p$, where $p$ is an odd prime number.
    \item $q = 5^p$, where $p$ is an odd prime number.
  \end{enumerate}
\end{MinimalNonCA_PSLsLemma}
\begin{proof}
  Let $p$ be a prime, let $m$ be a positive integer, and put $q := p^m$ and $G := \PSL{2}{q}$.
  If $p = 2$, then $G$ is a \textbf{CA}-group by Theorem~\ref{CAGroupClassificationTheorem}.
  Therefore from now on we assume that $p > 2$. 

  Since the non-\textbf{CA}-group $\Sym{4}$ embeds into $G$ if and only if $q^2 \equiv 1$ (mod~16),
  the group $G$ can only be a \textbf{CA}-group if $16 \nmid q^2-1$.

  Suppose that $p > 5$. If $m \neq 1$, then by Lemma~\ref{DicksonsLemma}, the group $G$ has
  a proper subgroup isomorphic to $\PSL{2}{p}$. By Theorem~\ref{CAGroupClassificationTheorem},
  the latter is not a \textbf{CA}-group. Therefore we can assume that $m = 1$.

  Suppose that $p \in \{3, 5\}$. If $m$ is not a prime, then $G$ has a subgroup isomorphic 
  to $H = \PSL{2}{p^r}$ for some $r \mid m$ and $r > 1$, and $H$ is not a \textbf{CA}-group. 
  Therefore in this case $G$ can only be a \textbf{CA}-group if $m$ is a prime number.

  Now let $16 \nmid q^2-1$, where $q$ is either a prime number greater than 5 or a power
  of 3 or~5 with prime exponent. In this case, we conclude from Lemma~\ref{DicksonsLemma} and
  Theorem~\ref{CAGroupClassificationTheorem} that it is enough to show that every subgroup
  isomorphic to $\C{q} \rtimes \C{(q-1)/(2,q-1)}$ is a \textbf{CA}-group -- as it is easy
  to see that all other subgroups of $G$ are \textbf{CA}-groups. For this, we show that these
  subgroups can be constructed in the following way:

  We define two types of elements $d_a$ ($a \in \GF{q}^*$) and $t_b$
  ($b \in \GF{q}$) of the special linear group $\SL{2}{q}$ as follows: 
  \[
    d_a \ := \
   \left[\begin{array}{cc} a & 0 \\ 0 & a ^{-1} \end{array} \right]
  \]
  and
  \[
    t_b \ := \
    \left[\begin{array}{cc} 1 & b \\ 0 & 1  \end{array} \right].
  \]
  Put $D := \{d_a \ | \ a \in \GF{q}^*\}$, $T := \{t_b \ | \ b \in \GF{q}\}$ and $L := DT$.
  Then $D$, $T$ and $L$ are subgroups of $\SL{2}{q}$, and $T$ is normal in~$L$. Clearly
  $$
    L \ = \ \left\{
              \left[ \begin{array}{cc} a & b \\ 0 & a^{-1} \end{array} \right]
              \ \middle | \ a \in \GF{q}^*, \ b \in \GF{q}
            \right\}.
            \hspace{6mm} (*)
  $$
  Now put $\bar{L} := L/Z$, where $Z$ is the center of $\SL{2}{q}$. It is not hard to see that
  $\bar{L}$ is a Frobenius group with complement and kernel $D/Z$ and $TZ/Z$, respectively.
  Hence by Theorem~\ref{CAGroupClassificationTheorem}, the group $\bar{L}$ is a \textbf{CA}-group.
\end{proof}

\begin{NonSolvableMinimalNonCA_PerfectLemma} 
\label{NonSolvableMinimalNonCA_PerfectLemma}
  Let $G$ be a finite non-solvable minimal non-\textbf{CA}-group.
  Then $G$ is perfect.
\end{NonSolvableMinimalNonCA_PerfectLemma}
\begin{proof}
  Suppose that $G$ would be not perfect. 
  If $H' < H$ for every proper subgroup $H$ of $G$, then $G$ is solvable, 
  contrary to our assumption (note that $G$ is a finite group). 
  So we may assume that $H' = H$ for some proper subgroup $H$ of $G$. 
  Now the group $K := \langle H < G | H' = H \rangle$ is perfect,
  and it is a normal subgroup of $G$. By Theorem~\ref{CAGroupClassificationTheorem},
  the only perfect \textbf{CA}-groups are $\SL{2}{q}$ and the Schur cover of $\PSL{2}{9}$. 

  First suppose that $K \cong \SL{2}{q}$ for some prime power $q$.
  Put $Z := {\rm Z}(G)$ and $C := {\rm C}_{G}(K)$.
  We claim that $C = Z$. Assume, on the contrary, that $Z < C$.
  Given $y \in G$, put $H_y := KC\langle y \rangle \leq G$.

  First consider the case that $H_y$ is a proper subgroup of~$G$, for every $y \in G$.
  In this case, by assumption, the $H_y$ are \textbf{CA}-groups. Therefore by
  Theorem~\ref{CAGroupClassificationTheorem}, we have $H_y/{\rm Z}(H_y) \cong \PSL{2}{q}$.
  Since $K \cong \SL{2}{q}$, we hence have $y \in {\rm Z}(H_y)$, so in particular $y$ commutes
  with all elements of~$C$. Now since by assumption all $H_y$ are proper subgroups
  of~$G$, this implies that all $y$ commute with all elements of~$C$, or in other words,
  that $C \leq Z$. It follows that $C = Z$, as claimed.

  Now consider the case that $G = KCY$, where $Y = \langle y \rangle$ for some element $y \in G$ 
  and $Y \nleq C$. Let $b \in \GF{p}^*$, and let $L$ and $t_b$ be the subgroup and the element
  of~$G$ defined in the proof of Lemma~\ref{MinimalNonCA_PSLsLemma} (see~(*)), respectively.
  Now we can assume that conjugation by $y$ induces a field automorphism of
  $K/{\rm Z}(K) \cong \PSL{2}{q}$. Since any field automorphism fixes the prime field
  and since for odd $q$ we have $\PSL{2}{q} \cong \SL{2}{q}/\langle {-1} \rangle$,
  we see that $y$ normalizes $L$ and that conjugation by~$y$ either maps $t_b$
  to itself or to $-t_b$. Clearly $t_b \notin {\rm Z}(LY)$, so $t_b \notin {\rm Z}(LYC)$.
  If $Y = \langle y \rangle$ does not centralize~$t_b$, then $t_b^y = -t_b$.
  But $|{-t_b}| = 2p \neq |t_b| = p$, a contradiction.
  If $Y$ centralizes $t_b$, then $YC \leq {\rm C}_{H_yYC}(t_b)$ is abelian, 
  so $YC$ is abelian. Therefore $C \leq Z$, and so $C = Z$.

  As we have $C = Z$, the group $G/Z$ has an element $g$ conjugation by which induces a field
  automorphism or a field-diagonal automorphism of $K/{\rm Z}(K) \cong \PSL{2}{q}$.
  -- Otherwise we would have $G/Z \cong \PGL{2}{q}$, which implies by
  Theorem~\ref{CAGroupClassificationTheorem} that $G$ is a \textbf{CA}-group -- a contradiction.
 Let $L$ be as above. Then the group $\langle g \rangle$
  acts on~$L$ by conjugation. Now the group $S := L \langle g \rangle$ is a proper subgroup of~$G$
  which by Theorem~\ref{CAGroupClassificationTheorem} is not 
  a \textbf{CA}-group.

  Now suppose that $K$ is the Schur cover of $\PSL{2}{9} \cong \Alt{6}$. 
  It is obvious that $G/C$ is an almost simple group with socle $KC/C \cong \Alt{6}$.
 
  First we claim that $G/CK$ does not contain an element which induces the field automorphism
  of $\PSL{2}{9}$. Assume, on the contrary, that $G/CK$ contains an element which corresponds
  to the field automorphism of $\PSL{2}{9} \cong \Alt{6}$. Then the group $G/C$ has a normal
  subgroup isomorphic to~$\Sym{6}$. Therefore in particular it has a subgroup isomorphic to the
  standard wreath product $\Sym{3} \wr \C{2}$ of $\Sym{3}$ and $\C{2}$. 
  Let $T$ be a subgroup of $G$ such that $T/C \cong \Sym{3} \wr \C{2}$. 
  Since $T$ is a \textbf{CA}-group, one can conclude that $T$ is as in
  Theorem~\ref{CAGroupClassificationTheorem}, Part~(1).
  But such a group does not have a factor isomorphic to $\Sym{3} \wr \C{2}$ --
  a contradiction. Hence our claim is proved. 

  So we have either $G/KC \cong \langle \sigma \rangle$ or 
  $G/KC \cong \langle \sigma\phi \rangle$, where $\phi$ corresponds to the field automorphism 
  and $\sigma$ corresponds to the diagonal automorphism.

  In the former case we have $G/C \cong \PGL{2}{9}$. Let $T < G$ be a subgroup such that $T/C$
  is isomorphic to the dihedral group of order 8 or~10. 
  By Theorem \ref{CAGroupClassificationTheorem}, we conclude that $C = Z$, and that
  $G$ is a \textbf{CA}-group -- and therefore not a minimal non-\textbf{CA}-group.

  In the latter case we have $G/C \cong {\rm M}_{10}$, where ${\rm M}_{10}$ is the Mathieu group
  of degree~$10$. Let $T < G$ be a subgroup such that $T/C \cong \C{3}^2 \rtimes {\rm Q}_8$
  is a Frobenius group of order~$72$. So $T$ is as in Theorem~\ref{CAGroupClassificationTheorem},
  Part~(2) or~(3). But the Frobenius complement ${\rm Q}_8$ of $T/C$ is not abelian -- and hence
  $T$ is not a \textbf{CA}-group, and $G$ is not a minimal non-\textbf{CA}-group.
\end{proof}

\begin{SuzukiGroupsLemma} \label{SuzukiGroupsLemma}
  The Suzuki group $\Sz{2^{2n+1}}$, $n \geq 1$, is not a minimal 
  non-\textbf{CA}-group.
\end{SuzukiGroupsLemma}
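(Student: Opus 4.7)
The plan is to exhibit, for each $n \geq 1$, a proper subgroup of $G := \Sz{q}$ (where $q := 2^{2n+1}$) which is not a \textbf{CA}-group. Producing such a subgroup suffices to conclude that $G$ is not a minimal non-\textbf{CA}-group, irrespective of whether $G$ itself is a \textbf{CA}-group.

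My chosen subgroup is the normalizer $N := \N{G}{P}$ of a Sylow $2$-subgroup $P$ of $G$. From the classical structure theory of $\Sz{q}$ I would record (without reproof) the following facts: the subgroup $N$ has order $q^2(q-1)$ and equals $P \rtimes H$ for a cyclic complement $H$ of order $q-1$; the Sylow subgroup $P$ is a non-abelian special $2$-group of order $q^2$ whose center ${\rm Z}(P) = [P,P]$ is elementary abelian of order $q$; and $H$ acts fixed-point-freely on ${\rm Z}(P) \setminus \{1\}$ (indeed, $N$ is a Frobenius group with kernel $P$ and complement $H$).

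Given this, the verification is short. Pick any nontrivial $z \in {\rm Z}(P)$. Since $z \in {\rm Z}(P)$, we have $P \leq {\rm C}_N(z)$, and because $P$ is non-abelian the centralizer ${\rm C}_N(z)$ is non-abelian. At the same time, the fixed-point-free action of $H$ on ${\rm Z}(P) \setminus \{1\}$ implies that $z$ does not commute with any non-identity element of $H$, so $z \notin {\rm Z}(N)$. Thus $z$ is a non-central element of $N$ whose centralizer is non-abelian, which shows that $N$ is not a \textbf{CA}-group. Since $|N| = q^2(q-1) < q^2(q-1)(q^2+1) = |G|$, the subgroup $N$ is proper, and the lemma follows.

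The only real obstacle is invoking the structure of the Sylow $2$-normalizer in $\Sz{q}$; these facts originate in Suzuki's construction of the groups and appear in any standard reference, so I would cite rather than rederive them. No case distinction on $n$ is needed, since the structural description and the fixed-point-free action hold uniformly for all $q = 2^{2n+1}$ with $n \geq 1$.
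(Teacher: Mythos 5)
Your proof is correct, and the subgroup you exhibit is in fact the same one the paper uses: the point stabilizer in the doubly transitive action of $\Sz{q}$ on $q^2+1$ points is exactly the Sylow $2$-normalizer $P \rtimes H$ of order $q^2(q-1)$. The differences are in the execution, and they are to your advantage. First, the paper concludes that this Frobenius group with non-abelian kernel is not a \textbf{CA}-group by appealing to Theorem~\ref{CAGroupClassificationTheorem}; this requires a moment of care, since case~(3) of that theorem does permit Frobenius \textbf{CA}-groups with non-abelian kernel (one has to check that the condition ${\rm Z}(P) = P \cap {\rm Z}(N)$ fails because ${\rm Z}(N)=1$ while ${\rm Z}(P)$ has order $q$). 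Your direct witness -- a nontrivial $z \in {\rm Z}(P)$ with $P \leq {\rm C}_N(z)$ non-abelian and $z \notin {\rm Z}(N)$ -- sidesteps this entirely and is self-contained modulo the standard structure of the Suzuki $2$-group. Second, the paper splits into cases according to whether $2n+1$ is prime (handling the composite case by descending to a subfield subgroup $\Sz{2^s}$), because the result it cites is stated for prime exponent; as you observe, the structure of the Sylow $2$-normalizer and the Frobenius action hold uniformly for all $q = 2^{2n+1}$ with $n \geq 1$, so no case distinction is needed. Your version is the cleaner argument.
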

\begin{proof}
  Let $G := \Sz{r}$, where $r = 2^{2n+1}$ for some $n \geq 1$.

  If $2n+1$ is prime, then by~\cite{Huppert-Blackburn82}, Theorem~3.3, the group $G$ is 
  a Zassenhaus group of degree $r^2+1$. Let $N < G$ be a point stabilizer. 
  Then $N$ is a Frobenius group with a non-abelian kernel $K$ of order $r^2$ 
  and a cyclic complement $H$. Hence, by
  Theorem~\ref{CAGroupClassificationTheorem}, $N$ is not a \textbf{CA}-group, 
  and therefore $G$ is not a minimal non-\textbf{CA}-group.

  If $2n+1$ is not prime, then $G$ has a subgroup isomorphic to $\Sz{r_0}$, 
  where $r_0 = 2^s$ for some prime number $s$, which has a subgroup that is 
  not a \textbf{CA}-group.
\end{proof}

Now we are ready to prove the main result.

\vspace{2mm}

\noindent \textbf{Proof of Theorem 1.2}.
Let $G$ be a non-solvable minimal non-\textbf{CA}-group.
By Lemma~\ref{NonSolvableMinimalNonCA_PerfectLemma}, the group $G$ is perfect.
Let $N$ be the solvable radical -- i.e. the largest solvable normal subgroup -- of $G$,
and let $M/N$ be a minimal normal subgroup of $G/N$.
First assume that $G/N \neq M/N$. Assume that $M/N = (T/N)^k$, 
where $T/N$ is a simple group. First we prove that $k = 1$. 
On the contrary, assume that $k \geq 2$. Since $T$ is a \textbf{CA}-group, 
we have $T/N \cong \PSL{2}{q}$ and $N \subseteq {\rm Z}(T)$. 
Now take a non-abelian subgroup of $T$, say $H$. Then obviously the subgroup 
generated by $H$ and one other copy of $T$ is not a \textbf{CA}-group --
a contradiction. So we see that $M/N$ is isomorphic to $\PSL{2}{q}$ 
and $N \subseteq {\rm Z}(M)$. By similar arguments, we have ${\rm C}_{G/N}(M/N) = 1$,
and hence $G/N$ is an almost simple group with socle $M/N$.
Since $G$ is perfect, we get a contradiction, and hence $G/N$ is simple. \\
If all subgroups of $G$ are solvable, then $G$ is a minimal non-solvable group.
In this case, by~\cite{Thompson68}, the Frattini quotient $G / \Phi(G)$ is
isomorphic to either a Suzuki group or a projective special linear group.

Now we claim that $\Phi(G) = {\rm Z}(G)$. 
Let $T$ be a subgroup of~$G$ such that $T/N$ is a dihedral group of order 
$q-1$ or $q+1$ or $2(q-1)$. Then $T$ is a solvable \textbf{CA}-group, so it falls under
one of the cases (1) -- (5) in Theorem~\ref{CAGroupClassificationTheorem}.
We conclude that $N < {\rm C}_G(N)$, and so $N = {\rm Z}(G)$.

As $G$ is perfect, $N$ is contained in the Schur multiplier of $G/N$.
Therefore $G$ is isomorphic to either a special linear group, $3.\Alt{6}$, $6.\Alt{6}$,
a projective special linear group, or a Suzuki group. By Lemma~\ref{SuzukiGroupsLemma}
and~\ref{MinimalNonCA_PSLsLemma}, among these groups, only the projective special linear groups
$\PSL{2}{q}$, for some particular values of~$q$, are minimal non-\textbf{CA}-groups.

Now assume that $G$ has at least one non-solvable subgroup, and that also all 
non-solvable subgroups of $G$ have a factor isomorphic to either $\PSL{2}{q}$ or $\PGL{2}{q}$.
It is not hard to see that for every non-solvable subgroup $H/N$ of $G/N$, the quotient
$(H/N)/{\rm Z}(H/N)$ is isomorphic to either $\PSL{2}{q}$ or $\PGL{2}{q}$. 
With a case-by-case analysis we can see that the only simple groups which satisfy
these conditions are projective special linear groups $\PSL{2}{q}$ for certain 
prime powers~$q$. Now Lemma~\ref{MinimalNonCA_PSLsLemma} completes the proof. \qed

%\section*{Acknowledgments}
%The authors would like to express their gratitude to Silvio Dolfi and Zeinab Akhlaghi for 
%invaluable comments. 

%%%%%%%%%%%%%%%%%%%%%%%%%%%%%%%%%%%%%%%%%%%%%%%%%%%%%%%%%%%%%%%%%%%%%%%%%%%%%%%%%%%%%%%%%%%%%%%%%%%%

\end{document}